\newtheorem{theorem}{Theorem}
\newtheorem{corollary}[theorem]{Corollary}
\newtheorem{definition}[theorem]{Definition}
\newtheorem{lemma}[theorem]{Lemma}
\theoremstyle{remark}
\newtheorem{example}[theorem]{Example}
\begin{document}

\begin{frontmatter}

\author{Amar Chidouh}
\ead{m2ma.chidouh@gmail.com}
\address{Laboratory of Dynamic Systems,
Houari Boumedienne University,
Algiers, Algeria}

\author{Delfim F. M. Torres}
\ead{delfim@ua.pt}
\address{Center for Research and Development in Mathematics and Applications (CIDMA),
Department of Mathematics, University of Aveiro, 3810-193 Aveiro, Portugal}


\title{A generalized Lyapunov's inequality for
a fractional boundary value problem\footnote{Part of
first author's Ph.D., which is carried out at
Houari Boumedienne University. See published version at
{\tt http://dx.doi.org/10.1016/j.cam.2016.03.035}}}


\begin{abstract}
We prove existence of positive solutions
to a nonlinear fractional boundary value problem.
Then, under some mild assumptions on the nonlinear term,
we obtain a smart generalization of Lyapunov's inequality.
The new results are illustrated through examples.
\end{abstract}


\begin{keyword}
fractional differential equations \sep
Lyapunov's inequality \sep
boundary value problem \sep
positive solutions \sep
Guo--Krasnoselskii fixed point theorem.

\medskip

\MSC[2010] 26A33 \sep 34A08.
\end{keyword}
\end{frontmatter}


\section{Introduction}

Lyapunov's inequality is an outstanding result in mathematics with many
different applications -- see \cite{MR3371249,MR3352688} and references
therein. The result, as proved by Lyapunov in 1907 \cite{MR1508297}, asserts
that if $q:[a,b]\rightarrow \mathbb{R}$ is a continuous function, then a
necessary condition for the boundary value problem
\begin{equation}  
\label{11}
\begin{cases}
y^{\prime \prime }+qy=0,\ a<t<b,\\
y(a)=y(b)=0
\end{cases}
\end{equation}
to have a nontrivial solution is given by
\begin{equation}  
\label{22}
\int\limits_{a}^{b}\left\vert q(s)\right\vert ds>\frac{4}{b-a}.
\end{equation}
Lyapunov's inequality \eqref{22} has taken many forms, including versions in
the context of fractional (noninteger order) calculus, where the
second-order derivative in \eqref{11} is substituted by a fractional
operator of order $\alpha$.

\begin{theorem}[See \protect\cite{MR3124347}]
\label{thm:rf} 
Consider the fractional boundary value problem
\begin{equation}  
\label{p11}
\begin{cases}
_{a}D^{\alpha }y+qy=0,\ a<t<b,\\
y(a)=y(b)=0,
\end{cases}
\end{equation}
where $_{a}D^{\alpha }$ is the (left) Riemann--Liouville derivative of order
$\alpha \in (1,2]$ and $q:[a,b]\rightarrow \mathbb{R}$ is a continuous
function. If \eqref{p11} has a nontrivial solution, then
\begin{equation}  
\label{i1}
\int\limits_{a}^{b}\left\vert q(s)\right\vert ds
> \Gamma(\alpha )\left( \frac{4}{b-a}\right)^{\alpha-1}.
\end{equation}
\end{theorem}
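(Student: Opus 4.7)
The plan is to convert the boundary value problem into an equivalent Hammerstein integral equation driven by a Green's function, and then derive the desired $L^{1}$-bound on $q$ from the resulting sup-norm estimate. First I would verify that, under the Riemann--Liouville derivative of order $\alpha\in(1,2]$ together with the Dirichlet conditions $y(a)=y(b)=0$, a continuous function $y$ solves \eqref{p11} if and only if
\[
y(t)=\int_{a}^{b}G(t,s)\,q(s)\,y(s)\,ds,
\]
where
\[
G(t,s)=\frac{1}{\Gamma(\alpha)}
\begin{cases}
\dfrac{[(t-a)(b-s)]^{\alpha-1}}{(b-a)^{\alpha-1}}-(t-s)^{\alpha-1}, & a\le s\le t\le b,\\[4pt]
\dfrac{[(t-a)(b-s)]^{\alpha-1}}{(b-a)^{\alpha-1}}, & a\le t\le s\le b.
\end{cases}
\]
This reduction is a routine fractional-calculus computation: apply $_{a}I^{\alpha}$ to the equation, use the two boundary conditions to fix the constants produced by the kernel of $_{a}D^{\alpha}$, and reorganize.

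Next I would study $G$ itself. The plan is to show that $G$ is nonnegative on $[a,b]^{2}$ and that, for every fixed $s$, the map $t\mapsto G(t,s)$ attains its maximum at $t=s$. On the branch $t\le s$ this is immediate from monotonicity. On the branch $t\ge s$, one must check that $\partial_{t}G(t,s)\le 0$, which after rearrangement amounts to
\[
\frac{(b-s)^{\alpha-1}}{(b-a)^{\alpha-1}}\le\left(\frac{t-s}{t-a}\right)^{\alpha-2};
\]
I expect this to be the main technical step. It is, however, true by inspection: the left-hand side is at most $1$ since $b-s\le b-a$, while the right-hand side is at least $1$ since $(t-s)/(t-a)\le 1$ and $\alpha-2\le 0$. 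Granted this monotonicity, one obtains
\[
0\le G(t,s)\le G(s,s)=\frac{1}{\Gamma(\alpha)}\,\frac{(s-a)^{\alpha-1}(b-s)^{\alpha-1}}{(b-a)^{\alpha-1}}.
\]

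The concluding step is elementary: since $\alpha-1>0$, maximizing $G(s,s)$ in $s$ reduces via the AM--GM inequality to $(s-a)(b-s)\le(b-a)^{2}/4$, attained at $s=(a+b)/2$, so that
\[
M:=\max_{(t,s)\in[a,b]^{2}}G(t,s)=\frac{(b-a)^{\alpha-1}}{\Gamma(\alpha)\,4^{\alpha-1}}.
\]
Given a nontrivial solution $y$, I would substitute it into the integral representation, take absolute values, and majorize $|y(s)|$ by $\|y\|_{\infty}$, obtaining $|y(t)|\le M\,\|y\|_{\infty}\int_{a}^{b}|q(s)|\,ds$ for every $t\in[a,b]$. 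Taking the supremum in $t$ and dividing by $\|y\|_{\infty}>0$ yields the (non-strict) form of \eqref{i1}. Strictness follows by observing that equality throughout the chain would force $|y(s)|\equiv\|y\|_{\infty}$ on $\{q\ne 0\}$, which is incompatible with the continuity of $y$ together with $y(a)=y(b)=0$.
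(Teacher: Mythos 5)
Your overall route is the standard one and coincides with how the paper itself handles this material: the integral representation is the paper's Lemma~\ref{lemmaRF} with $f(y)=y$, the bounds on $G$ are items 1--3 of Lemma~\ref{lll}, and the concluding estimate is exactly the proof of Theorem~\ref{th0} specialized to $f(y)=y$ (for Theorem~\ref{thm:rf} itself the paper only cites Ferreira). The reduction to the integral equation, the monotonicity check via $\frac{(b-s)^{\alpha-1}}{(b-a)^{\alpha-1}}\le\left(\frac{t-s}{t-a}\right)^{\alpha-2}$, the AM--GM maximization of $G(s,s)$, and the resulting non-strict bound $1\le M\int_a^b|q(s)|\,ds$ are all correct.

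The one genuine gap is your justification of strictness. Equality throughout your chain forces, at a point $t_{0}$ where $|y|$ attains its maximum, \emph{both} $|y(s)|=\|y\|_{\infty}$ and $G(t_{0},s)=M$ for a.e.\ $s$ with $q(s)\ne0$. You invoke only the first condition and claim it contradicts continuity together with $y(a)=y(b)=0$; it does not: a continuous function vanishing at the endpoints can equal its maximum on an entire subinterval, and $\{q\ne0\}$ (a nonempty open set, since $q$ is continuous and nontrivial) may well be contained in such a plateau. The condition that actually produces the contradiction is the second one: $G(t_{0},s)=M$ holds only for $s=t_{0}=(a+b)/2$, a single point, whereas $\{q\ne0\}$ has positive measure. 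Equivalently, you can place the strict inequality where the paper does in the proof of Theorem~\ref{th0}, namely in the step $\int_{a}^{b}G(s,s)|q(s)||y(s)|\,ds<M\int_{a}^{b}|q(s)||y(s)|\,ds$, which holds because $G(s,s)<M$ for every $s\ne(a+b)/2$ and $|q||y|$ is not a.e.\ zero (if it were, the integral representation would force $y\equiv0$). With either repair the argument is complete.
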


A Lyapunov fractional inequality \eqref{i1} can also be obtained by
considering the fractional derivative in \eqref{p11} in the sense of Caputo
instead of Riemann--Liouville \cite{ferreira2014lyapunov}. More recently,
Rong and Bai obtained a Lyapunov-type inequality for a fractional
differential equation but with fractional boundary conditions \cite{MR3318923}.  
Motivated by \cite{MR3352131,MR3338866,MR3337865,MR3377951} 
and the above results, as well as existence results on positive solutions
\cite{[01],MR2168413,existence,sub}, which are often useful in applications, 
we focus here on the following boundary value problem:
\begin{equation}  
\label{p}
\begin{cases}
_{a}D^{\alpha }y+q(t)f(y)=0,\ a<t<b, \\
y(a)=y(b)=0,
\end{cases}
\end{equation}
where $_{a}D^{\alpha }$ is the Riemann--Liouville derivative and $1<\alpha \leq 2$.
Our first result asserts existence of nontrivial positive solutions to
problem \eqref{p} (see Theorem~\ref{th}). Then, under some assumptions on
the nonlinear term $f$, we get a generalization of inequality \eqref{i1}
(see Theorem~\ref{th0}).

The paper is organized as follows. In Section~\ref{sec:2} we recall some
notations, definitions and preliminary facts, which are used throughout the
work. Our results are given in Section~\ref{sec:3}: using the
Guo--Krasnoselskii fixed point theorem, we establish in Section~\ref{sec:3.1}
our existence result; then, in Section~\ref{sec:3.2}, assuming that function
$f : \mathbb{R}_{+}\rightarrow \mathbb{R}_{+}$ is continuous, concave and
nondecreasing, we generalize Lyapunov's inequalities \eqref{22} and \eqref{i1}.


\section{Preliminaries}
\label{sec:2}

Let $C[a,b]$ be the Banach space of all continuous real functions defined on
$[a,b]$ with the norm $\left\Vert u\right\Vert =\sup_{t\in \lbrack a,b]}
\left\vert u(t)\right\vert$. By $L[a,b]$ we denote the space of all real 
functions, defined on $[a,b]$, which are Lebesgue integrable with the norm
\begin{equation*}
\left\Vert u\right\Vert_{L} 
=\int\limits_{a}^{b}\left\vert u(s)\right\vert ds.
\end{equation*}

The reader interested in the fractional calculus is referred to \cite{MR1347689}. 
Here we just recall the definition of (left) Riemann--Liouville
fractional derivative.

\begin{definition}
The Riemann--Liouville fractional derivative of order $\alpha >0$ of a
function $u:[a,b]\rightarrow \mathbb{R}$ is given by
\begin{equation*}
_{a}D^{\alpha }u(t)=\frac{1}{\Gamma(n-\alpha )}\frac{d^{n}}{dt^{n}}
\int_{a}^{t}\frac{u(s)}{(t-s)^{\alpha -n+1}}ds,
\end{equation*}
where $n=[\alpha ]+1$ and $\Gamma $ denotes the Gamma function.
\end{definition}

\begin{definition}
Let $X$ be a real Banach space. A nonempty closed convex set $P\subset X$ is
called a cone if it satisfies the following two conditions:
\begin{description}
\item[$(i)$] $x\in P$, $\lambda \geq 0$, implies $\lambda x\in P$;

\item[$(ii)$] $x\in P$, $-x\in P$, implies $x=0$.
\end{description}
\end{definition}

\begin{lemma}[Jensen's inequality \cite{MR924157}]
\label{1'} 
Let $\mu$ be a positive measure and let $\Omega$ be a
measurable set with $\mu(\Omega )=1$. Let $I$ be an interval and suppose
that $u$ is a real function in $L(d\mu)$ with $u(t)\in I$ for all 
$t\in \Omega$. If $f$ is convex on $I$, then
\begin{equation}
\label{jenss}
f\left(\int_{\Omega }u(t)d\mu (t)\right) 
\leq \int_{\Omega }(f\circ u)(t)d\mu (t).  
\end{equation}
If $f$ is concave on $I$, then 
the inequality \eqref{jenss} holds with ``$\leq$'' 
substituted by ``$\geq$''.
\end{lemma}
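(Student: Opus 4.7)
The plan is to establish Jensen's inequality via the supporting-line (subgradient) characterization of convex functions. First I would verify that the point
\[
x_{0} := \int_{\Omega} u(t)\, d\mu(t)
\]
lies in $I$. Since $\mu$ is a probability measure ($\mu(\Omega) = 1$) and $u(t) \in I$ for every $t \in \Omega$, the integral is a generalized convex combination of points of $I$ and therefore belongs to the (convex) interval $I$. In the degenerate case where $x_{0}$ is an endpoint of $I$ not attained by $u$ on a set of positive measure, one shows that $u = x_{0}$ $\mu$-a.e., in which case \eqref{jenss} is an equality and nothing is to prove.

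The key analytic input is the fact that a convex function $f$ on an interval $I$ admits, at each interior point $x_{0}$, a supporting affine minorant: there exists $m \in \mathbb{R}$ such that
\[
f(x) \geq f(x_{0}) + m\,(x - x_{0}) \qquad \text{for all } x \in I.
\]
I would derive this from the monotonicity of the difference quotients of a convex function on $I$, which guarantees that the one-sided derivatives $f'_{-}(x_{0})$ and $f'_{+}(x_{0})$ exist and satisfy $f'_{-}(x_{0}) \leq f'_{+}(x_{0})$; any slope $m$ in this interval then gives a valid support line.

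Substituting $x = u(t)$ into the supporting-line inequality and integrating against $d\mu$ (this is legitimate because $f \circ u$ is measurable, as $f$ is continuous on the interior of $I$, and because $m\cdot u \in L(d\mu)$ since $u \in L(d\mu)$), I obtain
\[
\int_{\Omega}(f\circ u)(t)\, d\mu(t) \geq f(x_{0}) + m \left( \int_{\Omega} u(t)\, d\mu(t) - x_{0} \right) = f(x_{0}),
\]
which is exactly \eqref{jenss}. The concave case follows by applying the inequality already proved to the convex function $-f$ and reversing the sign.

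The main obstacle, though a mild one, is the endpoint technicality: justifying that the supporting line exists at $x_{0}$ whenever the inequality is non-trivial. On the interior of $I$ this is standard via one-sided derivatives; at a boundary point of $I$ contained in $I$ one uses a one-sided subgradient, or argues (as indicated above) that $u$ must be essentially constant so the inequality degenerates to an identity. The remaining ingredients -- measurability of $f\circ u$ and integrability issues -- are handled by the continuity of convex functions on the interior of $I$ together with the hypothesis $u \in L(d\mu)$.
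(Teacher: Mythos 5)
The paper gives no proof of this lemma at all --- it is quoted as a known result with a citation to Rudin's \emph{Real and Complex Analysis}, where the proof is precisely the supporting-line argument you give (Rudin takes $m$ to be the supremum of the left difference quotients at $x_{0}$). Your proposal is correct, handles the endpoint and measurability technicalities appropriately, and coincides in substance with the classical proof in the cited source, so there is nothing to compare beyond noting that you have supplied a proof where the paper supplies only a reference.
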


\begin{lemma}[Guo--Krasnoselskii fixed point theorem \cite{[3]}]
\label{kras} 
Let $X$ be a Banach space and let $K\subset X$ be a cone.
Assume $\Omega_{1}$ and $\Omega_{2}$ are bounded open subsets of $X$ with 
$0\in \Omega_{1}\subset \overline{\Omega }_{1}\subset $ $\Omega _{2}$, and
let $T:K\cap (\overline{\Omega }_{2}\backslash \Omega _{1})\rightarrow K$ 
be a completely continuous operator such that
\begin{description}
\item[$(i)$] $\left\Vert Tu\right\Vert \geq \left\Vert u\right\Vert $ for
any $u\in K\cap \partial \Omega _{1}$ and $\left\Vert Tu\right\Vert \leq
\left\Vert u\right\Vert$ for any $u\in K\cap \partial \Omega_{2}$; or

\item[$(ii)$] $\left\Vert Tu\right\Vert \leq \left\Vert u\right\Vert$ for
any $u\in K\cap \partial \Omega_{1}$ and $\left\Vert Tu\right\Vert \geq
\left\Vert u\right\Vert $ for any $u\in K\cap \partial \Omega_{2}$.
\end{description}
Then, $T$ has a fixed point in $K\cap (\overline{\Omega }_{2}\backslash \Omega_{1})$.
\end{lemma}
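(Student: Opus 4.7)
The plan is to prove the Guo--Krasnoselskii lemma by invoking the fixed point index on a cone. The three pieces of index theory I will rely on are: (a) the \emph{solution property}: if $i(T, K\cap\Omega, K) \neq 0$ then $T$ has a fixed point in $K\cap\Omega$; (b) \emph{additivity}: $i(T, K\cap\Omega, K) = i(T, K\cap U_1, K) + i(T, K\cap U_2, K)$ for disjoint relatively open $U_1, U_2 \subset \Omega$ containing all fixed points; and (c) two standard \emph{index computations} for completely continuous $T: K\cap\overline{\Omega}\to K$ fixed-point-free on $K\cap\partial\Omega$:
\[
Tu \neq \lambda u \ \forall\, u\in K\cap\partial\Omega,\ \lambda\geq 1 \ \Longrightarrow\ i(T,K\cap\Omega,K)=1,
\]
\[
\exists\, e\in K\setminus\{0\}:\ u-Tu \neq \mu e\ \forall\, u\in K\cap\partial\Omega,\ \mu\geq 0 \ \Longrightarrow\ i(T,K\cap\Omega,K)=0.
\]

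First, I may assume without loss of generality that $T$ has no fixed point on $K\cap\partial\Omega_1$ nor on $K\cap\partial\Omega_2$, since otherwise the conclusion is immediate. Then I handle case $(i)$ by two index computations. To obtain $i(T, K\cap\Omega_2, K)=1$, I verify the hypothesis of (c$_1$): if $Tu=\lambda u$ for some $u\in K\cap\partial\Omega_2$ and $\lambda\geq 1$, then $\lambda \|u\| = \|Tu\| \leq \|u\|$ forces $\lambda=1$, contradicting the no-fixed-point assumption. To obtain $i(T, K\cap\Omega_1, K)=0$, I pick any $e\in K$ with $\|e\|=1$ (which exists unless $K=\{0\}$, a trivial case) and verify (c$_2$): if instead $u=Tu+\mu e$ for some $u\in K\cap\partial\Omega_1$ and $\mu\geq 0$, then iterating and using $\|Tu\|\geq \|u\|$ together with the cone property produces a sequence in $K\cap\partial\Omega_1$ whose norms must blow up, contradicting the boundedness of $\partial\Omega_1$.

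With both computations in hand, additivity yields
\[
i\bigl(T, K\cap(\Omega_2\setminus\overline{\Omega}_1), K\bigr)
= i(T, K\cap\Omega_2, K) - i(T, K\cap\Omega_1, K) = 1 - 0 = 1 \neq 0,
\]
and the solution property delivers a fixed point in $K\cap(\Omega_2\setminus\overline{\Omega}_1)\subset K\cap(\overline{\Omega}_2\setminus\Omega_1)$. Case $(ii)$ is symmetric: the roles of $\Omega_1$ and $\Omega_2$ swap, giving $i(T,K\cap\Omega_1,K)=1$ and $i(T,K\cap\Omega_2,K)=0$, and the same additivity argument produces the fixed point.

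The main obstacle is not in the high-level assembly above but in the foundational step of constructing the fixed point index on the cone $K$. This requires extending the classical Leray--Schauder degree from the ambient Banach space to the cone via a retraction $r: X\to K$, and then verifying that the index so defined enjoys additivity, homotopy invariance, and the two normalization computations (c$_1$) and (c$_2$). The index-zero computation is the subtler of the two: the cleanest route is a homotopy $H(s,u)=Tu+s e$ together with an \emph{a priori} bound showing that no fixed point of $H(s,\cdot)$ can sit on $K\cap\partial\Omega_1$ for $s$ large, whence the index at $s=0$ equals the index of $H(s_0,\cdot)$ for a large $s_0$, which is $0$ because $H(s_0,\cdot)$ has no fixed points in $K\cap\overline{\Omega}_1$ at all.
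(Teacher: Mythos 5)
First, a point of orientation: the paper does not prove this lemma at all --- it is quoted verbatim from Guo--Lakshmikantham \cite{[3]} and used as a black box, so there is no in-paper proof to compare against. Your overall architecture (fixed point index on the cone, index $1$ on one boundary, index $0$ on the other, additivity, solution property) is indeed the standard route taken in the cited reference, and your index-$1$ computation is fine: $Tu=\lambda u$ with $\lambda\geq 1$ and $\Vert Tu\Vert\leq\Vert u\Vert$ forces $\lambda=1$ because $0\in\Omega_1\subset\Omega_2$ guarantees $\Vert u\Vert>0$ on $\partial\Omega_2$.

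The genuine gap is in your index-$0$ computation. You claim that for an arbitrary unit vector $e\in K$ the hypothesis of (c$_2$), namely $u-Tu\neq\mu e$ on $K\cap\partial\Omega_1$, follows from $\Vert Tu\Vert\geq\Vert u\Vert$ by ``iterating'' to produce a sequence with blowing-up norms. There is nothing to iterate ($u=Tu+\mu e$ is a single identity, and $Tu+\mu e$ need not lie on $\partial\Omega_1$ again), and the implication itself is false for a general cone: take $X=\mathbb{R}^2$ with the sup norm, $K$ the first quadrant, $u=(1,1)$, $Tu=(1,0)$, $e=(0,1)$; then $\Vert Tu\Vert_\infty=\Vert u\Vert_\infty$ yet $u-Tu=e$. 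Norms are not additive on a cone, so $\Vert Tu+\mu e\Vert$ can fail to exceed $\Vert Tu\Vert$. Your closing homotopy $H(s,u)=Tu+se$ has the same problem: homotopy invariance requires no fixed points on $K\cap\partial\Omega_1$ for \emph{every} $s\in[0,s_0]$, and ruling these out for small $s>0$ is exactly the unproved claim. The standard repair (and the one in \cite{[3]}) is to use a different index-$0$ criterion: if $\inf_{u\in K\cap\partial\Omega_1}\Vert Tu\Vert>0$ and $Tu\neq\mu u$ for all $u\in K\cap\partial\Omega_1$ and $0<\mu\leq 1$, then $i(T,K\cap\Omega_1,K)=0$; both hypotheses do follow from $\Vert Tu\Vert\geq\Vert u\Vert$, since $0\in\Omega_1$ open gives $\inf_{\partial\Omega_1}\Vert u\Vert>0$ and $Tu=\mu u$ with $\mu<1$ would give $\Vert Tu\Vert<\Vert u\Vert$. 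A secondary, fixable omission: $T$ is only defined on the annulus $K\cap(\overline{\Omega}_2\setminus\Omega_1)$, so before computing $i(T,K\cap\Omega_1,K)$ and $i(T,K\cap\Omega_2,K)$ you must extend $T$ to a completely continuous map on $K\cap\overline{\Omega}_2$ (e.g.\ by a Dugundji extension followed by a retraction onto $K$), which you never mention.
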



\section{Main results}
\label{sec:3}

Let us consider the nonlinear fractional boundary value problem \eqref{p}
and give its integral representation involving a Green function, which was
deduced in \cite{MR3124347}.

\begin{lemma}
\label{lemmaRF}
Function $y$ is a solution to the boundary value problem \eqref{p} 
if, and only if, $y$ satisfies the integral equation
\begin{equation*}
y(t)=\int\limits_{a}^{b}G(t,s)q(s)f(y(s))ds,
\end{equation*}
where
\begin{equation}
\label{eq:Gf}
G(t,s)=\frac{1}{\Gamma(\alpha)}
\begin{cases}
\frac{(t-a)^{\alpha -1}(b-s)^{\alpha -1}}{(b-a)^{\alpha -1}}
-(t-s)^{\alpha-1},\quad a\leq s\leq t\leq b, \\
\frac{(t-a)^{\alpha -1}(b-s)^{\alpha -1}}{(b-a)^{\alpha -1}},
\quad a\leq s\leq t\leq b,
\end{cases}
\end{equation}
is the Green function associated to problem \eqref{p}.
\end{lemma}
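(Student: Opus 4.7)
The plan is to reduce the fractional boundary value problem \eqref{p} to an integral equation by applying the Riemann--Liouville fractional integral $_{a}I^{\alpha}$ to both sides of the differential equation. Since $n = [\alpha]+1 = 2$ for $1 < \alpha \leq 2$, the standard inversion formula yields
\begin{equation*}
y(t) = c_{1}(t-a)^{\alpha-1} + c_{2}(t-a)^{\alpha-2} - \frac{1}{\Gamma(\alpha)}\int_{a}^{t}(t-s)^{\alpha-1}q(s)f(y(s))\,ds
\end{equation*}
for some constants $c_{1}, c_{2}\in\mathbb{R}$.

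Next I would impose the boundary conditions. The requirement $y(a)=0$ forces $c_{2}=0$, because $(t-a)^{\alpha-2}$ is unbounded at $t=a$ when $1<\alpha<2$ (and if $\alpha=2$ the condition directly annihilates the constant term). The second condition $y(b)=0$ then determines
\begin{equation*}
c_{1} = \frac{1}{\Gamma(\alpha)(b-a)^{\alpha-1}}\int_{a}^{b}(b-s)^{\alpha-1}q(s)f(y(s))\,ds.
\end{equation*}

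Substituting $c_{1}$ back into the expression for $y(t)$ and splitting $\int_{a}^{b}=\int_{a}^{t}+\int_{t}^{b}$ so that the boundary contribution can be merged with the pre-existing $\int_{a}^{t}$ term, I would collect coefficients of $q(s)f(y(s))$ on each subinterval. On $[a,t]$ the two contributions combine to produce the first branch of \eqref{eq:Gf}, while on $[t,b]$ only the boundary contribution survives, giving the second branch. The converse direction is a direct verification: differentiating $y(t) = \int_{a}^{b}G(t,s)q(s)f(y(s))\,ds$ via the definition of $_{a}D^{\alpha}$ recovers $_{a}D^{\alpha}y = -q(t)f(y(t))$, and evaluating at $t=a,b$ using the explicit form of $G$ yields the two boundary conditions.

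The main obstacle is purely bookkeeping: correctly invoking the inversion formula for $_{a}I^{\alpha}\,_{a}D^{\alpha}$, justifying $c_{2}=0$ in the appropriate function space where $y$ must remain continuous on $[a,b]$, and carefully splitting the single integral to identify the piecewise kernel \eqref{eq:Gf}. Since the analogous representation for the linear problem was already established in \cite{MR3124347}, the same calculation carries over verbatim with $y(s)$ replaced by $f(y(s))$ in the source term, so the lemma could alternatively be obtained by direct citation.
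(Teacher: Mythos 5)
Your proposal is correct and follows exactly the standard derivation (fractional inversion formula, eliminate $c_2$ by boundedness at $t=a$, determine $c_1$ from $y(b)=0$, split the integral to read off the piecewise kernel), which is precisely the argument of \cite{MR3124347} that the paper's one-line proof defers to. You even note the citation route yourself, so there is nothing to add.
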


\begin{proof}
Similar to the one found in \cite{MR3124347}.
\end{proof}

\begin{lemma}
\label{lll}
The Green function $G$ defined by \eqref{eq:Gf} 
satisfies the following properties:
\begin{enumerate}
\item $G(t,s)\geq 0$ for all $a\leq t,s\leq b$;

\item $\max_{t\in \lbrack a,b]}G(t,s)=G(s,s),\ s\in \lbrack a,b]$;

\item $G(s,s)$ has a unique maximum given by
\begin{equation*}
\max_{s\in \lbrack a,b]}G(s,s)
=\frac{(b-a)^{\alpha -1}}{4^{\alpha -1}\Gamma(\alpha)};
\end{equation*}

\item there exists a positive function 
$\varphi \in C(a,b)$ such that
\begin{equation*}
\min_{t\in \left[ \frac{2a+b}{3},\frac{2b-a}{3}\right]}G(t,s)
\geq \varphi(s) G(s,s), \quad a<s<b.  
\end{equation*}
\end{enumerate}
\end{lemma}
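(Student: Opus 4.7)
The strategy is to dispatch the four assertions in order, repeatedly exploiting the piecewise form of $G$ together with the facts that $\alpha-1\in(0,1]$ while $\alpha-2\in(-1,0]$.

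Item (1) is trivial on the branch $\{a\leq t\leq s\leq b\}$, where $G$ is a product of nonnegative factors. On the branch $\{a\leq s\leq t\leq b\}$, after clearing $\Gamma(\alpha)$ and using $\alpha-1>0$, the inequality $G(t,s)\geq 0$ reduces to $(t-a)(b-s)\geq(t-s)(b-a)$, and an expansion shows the difference equals $(s-a)(b-t)\geq 0$.

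For item (2), for fixed $s$ the map $t\mapsto G(t,s)$ is immediately seen to be increasing on $[a,s]$ from the explicit $(t-a)^{\alpha-1}$ factor, so the work is to show it is decreasing on $[s,b]$. I would compute
\[
\partial_t G(t,s)=\frac{\alpha-1}{\Gamma(\alpha)}\!\left[\frac{(t-a)^{\alpha-2}(b-s)^{\alpha-1}}{(b-a)^{\alpha-1}}-(t-s)^{\alpha-2}\right],\qquad s<t\leq b,
\]
rearrange $\partial_t G\leq 0$ into
\[
\left(\frac{t-a}{t-s}\right)^{\alpha-2}\left(\frac{b-s}{b-a}\right)^{\alpha-1}\leq 1,
\]
and observe that both factors are at most $1$: the first because $\alpha-2\leq 0$ and $(t-a)/(t-s)\geq 1$, the second because $\alpha-1>0$ and $(b-s)/(b-a)\leq 1$. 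Hence the maximum occurs at $t=s$, giving $G(s,s)=(s-a)^{\alpha-1}(b-s)^{\alpha-1}/[\Gamma(\alpha)(b-a)^{\alpha-1}]$. For item (3) I would then maximize $[(s-a)(b-s)]^{\alpha-1}$; elementary calculus gives $s=(a+b)/2$ and $(s-a)(b-s)=(b-a)^2/4$, whence the stated value $(b-a)^{\alpha-1}/[4^{\alpha-1}\Gamma(\alpha)]$.

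For item (4), the unimodality established in item (2) forces $t\mapsto G(t,s)$ to attain its minimum on the compact subinterval $[(2a+b)/3,(2b-a)/3]$ at one of the endpoints (the unique peak at $t=s$ lies above). I would therefore set
\[
\varphi(s):=\frac{1}{G(s,s)}\min\!\left\{G\!\left(\tfrac{2a+b}{3},s\right),\;G\!\left(\tfrac{2b-a}{3},s\right)\right\},\qquad s\in(a,b).
\]
Continuity on $(a,b)$ is clear from joint continuity of $G$ and from $G(s,s)>0$ there. Strict positivity of the two numerator terms follows from the \emph{strict} version of item (1): since both evaluation points lie in the open interval $(a,b)$, the factorization $(s-a)(b-t)>0$ applies whenever $s\in(a,b)$.

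The main technical nuisance I anticipate is the derivative step in item (2): when $\alpha<2$, $(t-s)^{\alpha-2}$ blows up as $t\to s^+$, so $\partial_tG=-\infty$ at that point and the monotonicity on $[s,b]$ has to be deduced from non-positivity of $\partial_tG$ on the open interval $(s,b]$ together with continuity of $G$ at $t=s$. Apart from this subtlety, the rest of the argument is essentially algebraic.
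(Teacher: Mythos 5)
Your proof is correct and follows essentially the same route as the paper: items (1)--(3) are precisely the arguments of Ferreira \cite{MR3124347} that the authors simply cite, and for item (4) both proofs use the unimodality of $t\mapsto G(t,s)$ to reduce the minimum over $\left[\frac{2a+b}{3},\frac{2b-a}{3}\right]$ to the two endpoint values. The only real difference is that the paper resolves $\min\left\{G\left(\frac{2a+b}{3},s\right),G\left(\frac{2b-a}{3},s\right)\right\}$ into an explicit piecewise formula for $\varphi$ (splitting at the crossing point $\lambda$ of the two endpoint values), which it needs later to compute $\overset{\ast}{\gamma}$ numerically in the examples, whereas your implicit definition of $\varphi$ is enough for the lemma itself.
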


\begin{proof}
The first three properties are proved in \cite{MR3124347}. Moreover,
we know that Green's function $G(t,s)$ is decreasing with respect
to $t$ for $s\leq t$ and increasing with respect to $t$ for $t\leq s$
\cite{MR3124347}. To prove the fourth property, we define the
following functions:
\begin{equation*}
g_{1}(t,s)=\frac{1}{\Gamma (\alpha )}\left[ 
\frac{(t-a)^{\alpha-1}(b-s)^{\alpha -1}}{(b-a)^{\alpha -1}}
-(t-s)^{\alpha -1}\right] 
\end{equation*}
and
\begin{equation*}
g_{2}(t,s)=\frac{1}{\Gamma (\alpha )}\left[ 
\frac{(t-a)^{\alpha-1}(b-s)^{\alpha-1}}{(b-a)^{\alpha -1}}\right].
\end{equation*}
Obviously, $G(t,s)>0$ for $t,s\in (a,b)$ and one can seek the minimum in an
interval of the form $\left[a+\frac{b-a}{n},b-\frac{b-a}{n}\right]$, 
where $n\geq 3$ is a natural number. 
For $t\in \left[ \frac{2a+b}{3},\frac{2b-a}{3}\right]$,
\begin{equation*}
\begin{split}
\min_{t\in \left[ 
\frac{2a+b}{3},\frac{2b-a}{3}\right] }G(t,s) 
&=
\begin{cases}
g_{1}\left(\frac{2b-a}{3},s\right) 
& \text{ if } s\in (a,\frac{2a+b}{3}], \\
\min \left\{ g_{1}(\frac{2b-a}{3},s),g_{2}\left(\frac{2a+b}{3},s\right)\right\}
& \text{ if } s\in \lbrack \frac{2a+b}{3},\frac{2b-a}{3}], \\
g_{2}\left(\frac{2a+b}{3},s\right) & \text{ if } s\in \lbrack \frac{2b-a}{3},b),
\end{cases} \\
&=
\begin{cases}
g_{1}\left(\frac{2b-a}{3},s\right) 
& \text{ if } s\in (a,\lambda ], \\
g_{2}\left(\frac{2a+b}{3},s\right) 
& \text{ if } s\in \lbrack \lambda ,b),
\end{cases} \\
&=\frac{1}{\Gamma (\alpha )}
\begin{cases}
\left(\frac{(2b-4a)\left(b-s\right)}{3(b-a)}\right)^{\alpha-1}
-\left(\frac{2b-a}{3}-s\right)^{\alpha -1} 
& \text{ if }  s\in (a,\lambda ], \\
\left( \frac{b-s}{3}\right)^{\alpha -1}
& \text{ if } s\in \lbrack \lambda ,b),
\end{cases}
\end{split}
\end{equation*}
where $\frac{2a+b}{3}<\lambda <\frac{2b-a}{3}$ 
is the unique solution of equation
\begin{equation*}
g_{1}\left(\frac{2b-a}{3},s\right)=g_{2}\left(\frac{2a+b}{3},s\right).
\end{equation*}
Set
\begin{equation*}
\varphi (s)=
\begin{cases}
\frac{\left( \frac{(2b-4a)\left( b-s\right) }{3}\right)^{\alpha-1}
-(b-a)^{\alpha -1}(\frac{2b-a}{3}-s)^{\alpha -1}}{((s-a)(b-s))^{\alpha -1}}
& \text{ if } s\in (a,\lambda ],\\
\left(\frac{(b-a)}{3(s-a)}\right)^{^{\alpha -1}}
& \text{ if } s\in \lbrack \lambda ,b).
\end{cases}
\end{equation*}
The proof is complete.
\end{proof}

Let $X=C[a,b]$ and define the operator $T:X\rightarrow X$ as follows:
\begin{equation}  
\label{T}
Ty(t)=\int\limits_{a}^{b}G(t,s)q(s)f(y(s))ds, \quad y\in X.
\end{equation}
To prove existence of solution to the fractional boundary value problem
\eqref{p} it suffices to prove that the map $T$ has a fixed point in $K$.


\subsection{Existence of positive solutions}
\label{sec:3.1}

To prove existence of nontrivial positive solutions to the fractional
boundary value problem \eqref{p} we consider the following hypotheses:
\begin{description}
\item[$(H_1)$] $f(y) \geq \overset{\ast }{\gamma}r_{1}$ 
for $y\in \lbrack 0,r_{1}]$,

\item[$(H_2)$] $f(y)\leq \gamma r_{2}$ for $y\in \lbrack 0,r_{2}]$,
\end{description}
where $f:\mathbb{R}_{+}\rightarrow \mathbb{R}_{+}$ is continuous. 
In what follows we take
\begin{equation*}
\gamma :=\left( \int\limits_{a}^{b}
G(s,s)q(s)ds\right)^{-1}
\text{ and } \overset{\ast }{\gamma }
:=\left( \int\limits_{\frac{2a+b}{3}}^{\frac{2b-a}{3}}G(s,s)
\varphi (s)q(s)ds\right)^{-1}.
\end{equation*}

\begin{theorem}
\label{th} 
Let $q:[a,b]\rightarrow \mathbb{R}_+$ 
be a nontrivial Lebesgue integrable function.
Assume that there exist two positive constants $r_{2}>r_{1}>0$
such that the assumptions $(H_1)$ and $(H_2)$ are satisfied. 
Then the fractional boundary value problem \eqref{p} has at 
least one nontrivial positive solution $y$ belonging to $X$ 
such that $r_{1} \leq \left\Vert y\right\Vert \leq r_{2}$.
\end{theorem}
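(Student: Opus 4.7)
The plan is to cast the boundary value problem \eqref{p} as a fixed point problem for the integral operator $T$ defined in \eqref{T}, and then apply the Guo--Krasnoselskii theorem (Lemma~\ref{kras}) on an appropriate cone. By Lemma~\ref{lemmaRF}, fixed points of $T$ are exactly solutions of \eqref{p}, so it suffices to produce a fixed point $y \in X$ with $r_1 \leq \|y\| \leq r_2$. Positivity will come automatically from the cone construction.

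First I would define the cone
\begin{equation*}
K = \left\{ y \in X : y(t) \geq 0 \text{ on } [a,b], \ \min_{t \in [(2a+b)/3,(2b-a)/3]} y(t) \geq \varphi_0 \|y\| \right\},
\end{equation*}
where $\varphi_0$ is a suitable positive constant built from the function $\varphi$ of Lemma~\ref{lll}(4). Using properties (1)--(4) of the Green function $G$ together with the nonnegativity of $q$ and $f$, I would verify that $T(K) \subset K$: nonnegativity of $Ty$ is immediate, while the ``min vs. max'' estimate uses properties (2) and (4) to compare $(Ty)(t)$ on the middle third with $\|Ty\|$. Complete continuity of $T:K \to K$ is then standard: continuity follows from continuity of $f$ and dominated convergence, and relative compactness of $T$ on bounded sets follows from the Arzel\`a--Ascoli theorem after an equicontinuity estimate on the Green kernel.

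Next I would set $\Omega_i = \{y \in X : \|y\| < r_i\}$ for $i=1,2$ and verify the two norm inequalities of Lemma~\ref{kras}(ii). For $y \in K \cap \partial \Omega_1$, so $\|y\| = r_1$, hypothesis $(H_1)$ gives $f(y(s)) \geq \overset{\ast}{\gamma} r_1$ for every $s \in [a,b]$ (since $0 \leq y(s) \leq r_1$). Choosing $t^* \in [(2a+b)/3,(2b-a)/3]$ and using Lemma~\ref{lll}(4),
\begin{equation*}
\|Ty\| \geq (Ty)(t^*) \geq \int_{(2a+b)/3}^{(2b-a)/3} \varphi(s)\, G(s,s)\, q(s)\, f(y(s))\, ds \geq \overset{\ast}{\gamma}\, r_1 \cdot \frac{1}{\overset{\ast}{\gamma}} = r_1 = \|y\|.
\end{equation*}
For $y \in K \cap \partial \Omega_2$, hypothesis $(H_2)$ together with Lemma~\ref{lll}(2) yields
\begin{equation*}
\|Ty\| \leq \int_a^b G(s,s)\, q(s)\, f(y(s))\, ds \leq \gamma\, r_2 \cdot \frac{1}{\gamma} = r_2 = \|y\|.
\end{equation*}
Lemma~\ref{kras}(ii) then delivers a fixed point $y \in K \cap (\overline{\Omega}_2 \setminus \Omega_1)$, i.e., $r_1 \leq \|y\| \leq r_2$, and membership in $K$ ensures $y \geq 0$ and $y \not\equiv 0$.

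The main obstacle is the cone invariance $T(K) \subset K$: I have to extract from Lemma~\ref{lll}(4) a uniform constant $\varphi_0$ that simultaneously controls $\min Ty$ from below and $\|Ty\|$ from above, which requires carefully combining property (2) (so that $\|Ty\| \leq \int G(s,s)q(s)f(y(s))\,ds$) with property (4) (so that $\min_{t} (Ty)(t) \geq \int \varphi(s) G(s,s) q(s) f(y(s))\,ds$ over the subinterval). Everything else -- complete continuity and the two norm comparisons -- is a direct application of the Green-function estimates already established and of hypotheses $(H_1)$, $(H_2)$.
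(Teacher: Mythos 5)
Your proposal is correct and follows essentially the same route as the paper: recast \eqref{p} as a fixed point problem for $T$, take $\Omega_i=\{y:\Vert y\Vert< r_i\}$, and verify the two norm inequalities of Lemma~\ref{kras} exactly as you do, using Lemma~\ref{lll}(4) with $(H_1)$ for the lower estimate and Lemma~\ref{lll}(2) with $(H_2)$ for the upper one. The only difference is that the paper works with the plain cone $K=\{y\in X: y(t)\geq 0\}$, so the ``main obstacle'' you identify (extracting a uniform $\varphi_0>0$ from $\varphi$ to prove invariance of your refined cone) never arises: since $(H_1)$ and $(H_2)$ are assumed on the full intervals $[0,r_1]$ and $[0,r_2]$, no lower bound of the form $\min y\geq \varphi_0\Vert y\Vert$ is needed anywhere in the argument.
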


For the proof of Theorem~\ref{th} we use Lemma~\ref{kras} 
with the cone $K$ given by
\begin{equation*}
K:=\left\{y\in X:\ y(t)\geq 0,\ a\leq t\leq b\right\}.
\end{equation*}

\begin{proof}[Proof of Theorem~\protect\ref{th}]
Using the Ascoli--Arzela theorem, we prove that $T:K\rightarrow K$ 
is a completely continuous operator. Let
\begin{equation*}
\Omega_{i}=\left\{ y\in K:\left\Vert y\right\Vert \leq r_{i}\right\}.
\end{equation*}
From $(H_1)$ and Lemma~\ref{lll}, we have for 
$t\in \left[ \frac{2a+b}{3},\frac{2b-a}{3}\right]$ 
and $y\in K\cap \partial \Omega_{1}$ that
\begin{equation*}
\begin{split}
(Ty)(t) 
&\geq \int\limits_{a}^{b}\min_{t\in \left[ 
\frac{2a+b}{3},\frac{2b-a}{3}\right]} G(t,s)q(s)f(y(s))ds \\
&\geq \overset{\ast }{\gamma }\left( \int\limits_{a}^{b}
G(s,s)\varphi(s)q(s)ds\right) r_{1} \\
&\geq \overset{\ast }{\gamma }\left( 
\int\limits_{\frac{2a+b}{3}}^{\frac{2b-a}{3}}
G(s,s)\varphi (s)q(s)ds\right) r_{1}=\left\Vert y\right\Vert.
\end{split}
\end{equation*}
Thus, $\left\Vert Ty\right\Vert \geq \left\Vert y\right\Vert$ for
$y\in K\cap \partial \Omega _{1}$. Let us now prove that $\left\Vert
Ty\right\Vert \leq \left\Vert y\right\Vert $ for all $y\in K\cap \partial
\Omega _{2}$. From $(H_2)$, it follows that
\begin{equation*}
\left\Vert Ty\right\Vert 
=\max_{t\in \lbrack a,b]}\int\limits_{a}^{b}
G(t,s)q(s)f(y(s))ds\leq \gamma \left(\int
\limits_{a}^{b}G(s,s)q(s)ds\right) r_{2}
=\left\Vert y\right\Vert
\end{equation*}
for $y\in K\cap \partial \Omega _{2}$. Thus, from Lemma~\ref{kras}, we
conclude that the operator $T$ defined by \eqref{T} has a fixed point in 
$K\cap (\overline{\Omega }_{2}\backslash \Omega _{1})$. Therefore, the
fractional boundary problem \eqref{p} has at least one positive solution 
$y$ belonging to $X$ such that $r_{1}\leq \left\Vert y\right\Vert \leq r_{2}$.
\end{proof}

\begin{example}
Consider the following fractional boundary value problem:
\begin{equation}
\label{pex}
\begin{cases}
_{0}D^{3/2}y+te^{y}=0 & \text{ if } 0<t<1,\\
y(0)=y(1)=0.
\end{cases}
\end{equation}
Firstly, let us calculate the values of $\gamma$ 
and $\overset{\ast }{\gamma}$. Here,
\begin{equation*}
\varphi(s)=
\begin{cases}
\frac{\sqrt{\frac{2\left( 1-s\right) }{3}}
-\sqrt{\frac{2}{3}-s}}{\sqrt{s(1-s})}
& \text{ if } s\in (0,\lambda ],\\
\frac{1}{\sqrt{3s}} 
& \text{ if } s\in \lbrack \lambda,1),
\end{cases}
\end{equation*}
where $\lambda \simeq 0.64645$. 
Hence, by a simple computation, we get
\begin{equation}
\label{eq:val:g}
\overset{\ast }{\gamma }\simeq 26.459 
\text{ and } \gamma \simeq 4.\,\allowbreak 514.  
\end{equation}
We choose $r_{1}=\frac{1}{27}$ and $r_{2}=1$. Then we get
\begin{enumerate}
\item $f(y)=e^{y}\geq \overset{\ast }{\gamma }r_{1}$ 
for $y\in \lbrack 0, \frac{1}{27}]$;

\item $f(y)=e^{y}\leq \gamma r_{2}$ for $y\in \lbrack 0,1]$.
\end{enumerate} 
Therefore, from Theorem~\ref{th}, problem \eqref{pex} has 
at least one nontrivial solution $y$ in $X$ such that 
$\frac{1}{27}\leq \left\Vert y\right\Vert \leq 1$.
\end{example}


\subsection{Generalized Lyapunov's inequality}
\label{sec:3.2}

The next result generalizes Theorem~\ref{thm:rf}: choosing $f(y)=y$ in
Theorem~\ref{th0}, inequality \eqref{eq:new:ineq} reduces to \eqref{i1}.
Note that $f\in C(\mathbb{R}_{+}, \mathbb{R}_{+})$ is a concave and
nondecreasing function.

\begin{theorem}
\label{th0} 
Let $q:[a,b]\rightarrow \mathbb{R}$ 
be a real nontrivial Lebesgue integrable function.
Assume that $f\in C\left( \mathbb{R}_{+},\mathbb{R}_{+}\right)$ is a
concave and nondecreasing function. If the fractional boundary value 
problem \eqref{p} has a nontrivial solution $y$, then
\begin{equation}
\label{eq:new:ineq}
\int\limits_{a}^{b} |q(t)| dt
>\frac{4^{\alpha -1}\Gamma (\alpha )\eta }{(b-a)^{\alpha -1}f(\eta)},  
\end{equation}
where $\eta =\max_{t\in \lbrack a,b]}y(t)$.
\end{theorem}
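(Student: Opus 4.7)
The plan is to run the standard Lyapunov argument on top of the integral representation supplied by Lemma~\ref{lemmaRF}, but replacing the linear bound $|y(s)| \le \eta$ used in the linear case by Jensen's inequality (Lemma~\ref{1'}) applied to the concave function $f$. Concretely, let $t_0\in[a,b]$ be a point at which $\eta=y(t_0)$ is attained. Using $G\ge 0$, $f\ge 0$, and $q(s)\le|q(s)|$, the integral equation yields
\begin{equation*}
\eta = y(t_0) = \int_a^b G(t_0,s)\,q(s)\,f(y(s))\,ds
\le \int_a^b G(t_0,s)\,|q(s)|\,f(y(s))\,ds.
\end{equation*}
Then property~3 of Lemma~\ref{lll} gives $G(t_0,s)\le (b-a)^{\alpha-1}/(4^{\alpha-1}\Gamma(\alpha))$, so
\begin{equation*}
\eta \le \frac{(b-a)^{\alpha-1}}{4^{\alpha-1}\Gamma(\alpha)}
\int_a^b |q(s)|\,f(y(s))\,ds.
\end{equation*}

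Next I would introduce the probability measure $d\mu(s) = |q(s)|\,ds/\|q\|_L$ on $[a,b]$, which is well defined since $q$ is nontrivial and Lebesgue integrable. Applying the concave form of Jensen's inequality (Lemma~\ref{1'}) to $f$ and the function $y\in L(d\mu)$ (which takes values in $\mathbb{R}_+$, the domain of $f$), I obtain
\begin{equation*}
\int_a^b f(y(s))\,d\mu(s)
\le f\!\left(\int_a^b y(s)\,d\mu(s)\right).
\end{equation*}
Because $y(s)\le \eta$ on $[a,b]$ and $\mu$ is a probability measure, $\int y\,d\mu\le\eta$, and since $f$ is nondecreasing, the right-hand side is bounded by $f(\eta)$. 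Multiplying through by $\|q\|_L$ gives $\int_a^b |q(s)|f(y(s))\,ds \le \|q\|_L f(\eta)$, and combining with the previous display yields
\begin{equation*}
\eta \;\le\; \frac{(b-a)^{\alpha-1}}{4^{\alpha-1}\Gamma(\alpha)}\,\|q\|_L\,f(\eta),
\end{equation*}
which after rearrangement is \eqref{eq:new:ineq} with a non-strict inequality.

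The final step, and the one requiring the most care, is to upgrade ``$\ge$'' to the strict inequality ``$>$'' claimed in \eqref{eq:new:ineq}. The bound $G(t_0,s)\le (b-a)^{\alpha-1}/(4^{\alpha-1}\Gamma(\alpha))$ is strict for every $s$ except the single point where $G(s,s)$ attains its unique maximum (property~3 of Lemma~\ref{lll}). Since $|q(s)|f(y(s))$ is integrable and cannot be supported on a single point without forcing $y\equiv 0$ (which contradicts the assumption that $y$ is nontrivial, via the integral representation of Lemma~\ref{lemmaRF}), the first inequality in the chain is strict, and this strictness propagates through the remaining monotone steps. Choosing $f(y)=y$ collapses the Jensen step to an equality and recovers exactly \eqref{i1}, providing a useful consistency check for the calculation.
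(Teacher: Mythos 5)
Your proof is correct and follows essentially the same route as the paper's: the integral representation from Lemma~\ref{lemmaRF}, the bound $G(t,s)\leq (b-a)^{\alpha-1}/\bigl(4^{\alpha-1}\Gamma(\alpha)\bigr)$ from Lemma~\ref{lll}, and Jensen's inequality for the concave nondecreasing $f$ with respect to the probability measure $|q(s)|\,ds/\Vert q\Vert_{L}$. Your final paragraph justifying the strictness of the inequality is in fact more careful than the paper, which asserts the strict bound $\int_a^b G(s,s)|q(s)|f(y(s))\,ds < \frac{(b-a)^{\alpha-1}}{4^{\alpha-1}\Gamma(\alpha)}\int_a^b |q(s)|f(y(s))\,ds$ without comment.
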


\begin{proof}
We begin by using Lemma~\ref{lemmaRF}. We have
\begin{equation*}
\begin{split}
\left\vert y(t)\right\vert 
&\leq \int\limits_{a}^{b}G(t,s) |q(s)| f(y(s))ds,\\
\left\Vert y\right\Vert 
&\leq \int\limits_{a}^{b}G(s,s) |q(s)| f(y(s))ds 
<\frac{(b-a)^{\alpha -1}}{4^{\alpha -1}\Gamma(\alpha)} 
\int\limits_{a}^{b} |q(s)| f(y(s)) ds.
\end{split}
\end{equation*}
Using Jensen's inequality \eqref{jenss}, and taking into account that 
$f$ is concave and nondecreasing, we get that
\begin{equation*}
\left\Vert y\right\Vert 
<\frac{(b-a)^{\alpha-1}\left\Vert q\right\Vert_{L}}{4^{\alpha -1}
\Gamma(\alpha)}\int\limits_{a}^{b}
\frac{|q(s)|f(y(s))ds}{\left\Vert q\right\Vert_{L}} \\
<\frac{(b-a)^{\alpha -1}\left\Vert q\right\Vert _{L}}{4^{\alpha -1}
\Gamma(\alpha)}f(\eta),
\end{equation*}
where $\eta =\max_{t\in \lbrack a,b]}y(t)$. Thus,
\begin{equation*}
\int\limits_{a}^{b} |q(s)| ds
>\frac{4^{\alpha -1} \Gamma(\alpha )\eta }{(b-a)^{\alpha -1}f(\eta)}.
\end{equation*}
This concludes the proof.
\end{proof}

\begin{corollary}
\label{co} 
Consider the fractional boundary value problem
\begin{equation*}
\begin{cases}
_{a}D^{\alpha }y+q(t)f(y)=0,\quad a<t<b,\\
y(a)=y(b)=0,
\end{cases}
\end{equation*}
where $f\in C\left( \mathbb{R}_{+},\mathbb{R}_{+}\right)$ is 
concave and nondecreasing and $q\in L([a,b],\mathbb{R}_{+}^{\ast})$. 
If there exist two positive constants $r_{2}>r_{1}>0$ such that 
$f(y) \geq \overset{\ast }{\gamma} r_{1}$ 
for $y\in \lbrack 0,r_{1}]$ and $f(y)\leq \gamma r_{2}$ 
for $y\in \lbrack 0,r_{2}]$, then
\begin{equation*}
\int\limits_{a}^{b}q(t)dt
>\frac{4^{\alpha -1}\Gamma (\alpha )r_{1}}{(b-a)^{\alpha -1}f(r_{2})}.
\end{equation*}
\end{corollary}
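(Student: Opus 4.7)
The plan is to view this corollary as the composition of the two main theorems that precede it. The hypotheses on $f$ (namely $f(y)\ge \overset{\ast}{\gamma}r_1$ on $[0,r_1]$ and $f(y)\le\gamma r_2$ on $[0,r_2]$) are exactly $(H_1)$ and $(H_2)$, so Theorem~\ref{th} applies and yields a nontrivial positive solution $y\in X$ of \eqref{p} with $r_1\le \|y\|\le r_2$. Since $f$ is additionally concave and nondecreasing, the hypotheses of Theorem~\ref{th0} are also met, so we get
\begin{equation*}
\int_a^b |q(t)|\,dt > \frac{4^{\alpha-1}\Gamma(\alpha)\eta}{(b-a)^{\alpha-1} f(\eta)},
\qquad \eta=\max_{t\in[a,b]} y(t)=\|y\|,
\end{equation*}
where the last equality uses positivity of $y$. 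Because $q\in L([a,b],\mathbb{R}_+^{\ast})$, the absolute value on the left can simply be dropped.

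The only remaining step is to bound $\eta/f(\eta)$ from below by $r_1/f(r_2)$. This cannot be done via monotonicity of the map $x\mapsto x/f(x)$ (which need not be monotone under the stated assumptions), but it follows directly from the two one-sided bounds already in hand: $\eta\ge r_1$ gives the numerator estimate, and $\eta\le r_2$ combined with the fact that $f$ is nondecreasing gives $f(\eta)\le f(r_2)$, hence $1/f(\eta)\ge 1/f(r_2)$. Multiplying these yields
\begin{equation*}
\frac{\eta}{f(\eta)}\ge \frac{r_1}{f(r_2)}.
\end{equation*}

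Chaining this with the Lyapunov-type inequality from Theorem~\ref{th0} produces the desired
\begin{equation*}
\int_a^b q(t)\,dt > \frac{4^{\alpha-1}\Gamma(\alpha)\,r_1}{(b-a)^{\alpha-1} f(r_2)},
\end{equation*}
completing the argument. There is no real obstacle here; the only point that requires a moment of care is to avoid a spurious appeal to monotonicity of $\eta\mapsto \eta/f(\eta)$ and instead exploit the numerator and denominator bounds separately, using that $f$ is nondecreasing (not the concavity, which was already consumed by Theorem~\ref{th0}).
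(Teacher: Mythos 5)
Your proof is correct and follows exactly the route the paper intends (the corollary is stated without a written proof, as an immediate consequence of Theorem~\ref{th} and Theorem~\ref{th0}): existence gives $r_1\le\eta=\|y\|\le r_2$, and then $\eta\ge r_1$ together with $f(\eta)\le f(r_2)$ from monotonicity yields the stated bound. Your explicit care in avoiding a monotonicity claim for $\eta\mapsto\eta/f(\eta)$ and in noting $\eta=\|y\|$ via positivity is a welcome bit of rigor beyond what the paper records.
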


\begin{example}
Consider the following fractional boundary value problem:
\begin{equation*}
\begin{cases}
_{0}D^{3/2}y + t \ln (2+y)=0, \quad 0<t<1, \\
y(0)=y(1)=0.
\end{cases}
\end{equation*}
We have that
\begin{description}
\item[$(i)$] $f(y)=\ln (2+y): \mathbb{R}_{+}\rightarrow \mathbb{R}_{+}$ is
continuous, concave and nondecreasing;

\item[$(ii)$] $q(t)=t:[0,1]\rightarrow \mathbb{R}_{+}$ is a Lebesgue
integral function with $\left\Vert q\right\Vert_{L}=1>0$.
\end{description}
We computed the values of $\gamma$ and $\overset{\ast }{\gamma}$ 
in \eqref{eq:val:g}. Choosing $r_{1}=1/40$ and $r_{2}=1$, we get
\begin{enumerate}
\item $f(y)=\ln (2+y)\geq \overset{\ast }{\gamma }r_{1}$ 
for $y\in \lbrack 0,1/40]$;

\item $f(y)=\ln (2+y)\leq \gamma r_{2}$ for $y\in \lbrack 0,1]$.
\end{enumerate}
Therefore, from Corollary~\ref{co}, we get that
\begin{equation*}
\int\limits_{0}^{1}q(t)dt
>\frac{4^{\alpha -1}\Gamma(\alpha )r_{1}}{(b-a)^{\alpha-1}f(r_{2})}
\simeq 4.0334\times 10^{-2}.
\end{equation*}
\end{example}


\section*{Acknowledgments}

This research was initiated while Chidouh was visiting the Department of
Mathematics of University of Aveiro, Portugal, 2015. The hospitality of the
host institution and the financial support of Houari Boumedienne University,
Algeria, are here gratefully acknowledged. Torres was supported through
CIDMA and the Portuguese Foundation for Science and Technology (FCT), within
project UID/MAT/04106/2013. The authors are grateful to Rachid
Bebbouchi for putting them in touch.
They are also very grateful to two anonymous referees 
for their careful reading of the submitted manuscript 
and for their valuable comments and questions.



\end{document}